\newtheorem*{acknowledgement}{Acknowledgement}
\newtheorem{corollary}{Corollary}
\newtheorem{definition}{Definition}
\newtheorem{lemma}{Lemma}
\newtheorem{proposition}{Proposition}
\newtheorem{theorem}{Theorem}
\numberwithin{equation}{section}
\begin{document}
\title[Miao-Tam critical metric]{Bach-flat critical metrics of the volume functional on 4-dimensional manifolds with boundary}
\author{A. Barros$^{1}$, R. Di\'ogenes$^{2}$ \& E. Ribeiro Jr.$^{3}$}
\address{$^{1,2}$ Universidade Federal do Cear\'a - UFC, Departamento  de Matem\'atica, Campus do Pici, Av. Humberto Monte, Bloco 914,
60455-760-Fortaleza / CE , Brazil.} \email{abarros@mat.ufc.br} \email{rafaeljpdiogenes@gmail.com}
\thanks{$^{1}$ Partially supported by CNPq/Brazil}
\thanks{$^{2}$ Partially supported by FUNCAP/Brazil}
\address{$^{3}$ Current: Department of Mathematics, Lehigh University, Bethlehem / PA, 18015, United States\\ Permanent: Universidade Federal do Cear\'a - UFC, Departamento  de Matem\'atica, Campus do Pici, Av. Humberto Monte, Bloco 914,
60455-760-Fortaleza / CE , Brazil.}\email{ernani@mat.ufc.br}
\thanks{$^{3}$ Partially supported by grants from  FUNCAP/Brazil and CNPq/Brazil}
\keywords{Volume functional; critical point equation; Bach-flat metrics} \subjclass[2000]{Primary 53C25, 53C20, 53C21; Secondary 53C65}
\date{November 8, 2013}

\begin{abstract}
The purpose of this article is to investigate Bach-flat critical metrics of the volume functional on a compact manifold $M$ with boundary $\partial M.$ Here, we prove that  a Bach-flat critical metric of the volume functional on  a simply connected 4-dimensional manifold with boundary isometric to a standard sphere must be isometric to a geodesic ball in a simply connected space form $\Bbb{R}^{4},$ $\Bbb{H}^{4}$ or $\Bbb{S}^{4}$. Moreover, we show that in dimension three the result even is true replacing the Bach-flat condition by the weaker assumption that $M$ has divergence-free Bach tensor.
\end{abstract}

\maketitle

\section{Introduction}

A fruitful problem in Riemannian geometry is to study the critical points of the volume functional associated to space of smooth Riemannian structures. In the last decades very much attention has been given to study the critical points of the volume functional. Here, we shall study the space of smooth Riemannian structures on compact manifolds with boundary that satisfies a critical point equation associated to a boundary value problem.

Recently, inspired in a result obtained in \cite{fan} as well as in the characterization of the critical points of the scalar curvature functional, Miao and Tam studied variational properties of the volume functional constrained to the space of metrics of constant scalar curvature on a given compact manifold with boundary.  For more details, we refer the reader to \cite{miaotam} and \cite{miaotamTAMS}. Afterward, in a celebrated article \cite{CEM} Corvino, Eichmair and Miao studied this problem in a general context.  In fact, they studied the modified problem of finding stationary points for the volume functional on the space of metrics whose scalar curvature is equal to a given constant. To do this, they localized a condition satisfied by such stationary points to smooth bounded domains.

We now recall the definition of critical metrics studied by Miao and Tam. Here, for simplicity, these metrics will be called Miao-Tam critical metrics.

\begin{definition}
\label{def1} A Miao-Tam critical metric is a 3-tuple $(M^n,\,g,\,f),$ where $(M^{n},\,g),$ is a compact Riemannian manifold of dimension at least three with a smooth boundary $\partial M$ and $f: M^{n}\to \Bbb{R}$ is a smooth function such that $f^{-1}(0)=\partial M$ satisfying the overdetermined-elliptic system
\begin{equation}
\label{eqMiaoTam} \mathfrak{L}_{g}^{*}(f)=g.
\end{equation} Here, $\mathfrak{L}_{g}^{*}$ is the formal $L^{2}$-adjoint of the linearization of the scalar curvature operator $\mathfrak{L}_{g}$. Such a function $f$ is called a potential function.
\end{definition}

We recall that $\mathfrak{L}_{g}^{*}(f)=-(\Delta f)g+Hess\,f-f Ric;$  see for instance \cite{besse}. Therefore, the fundamental equation of a Miao-Tam critical metric (\ref{eqMiaoTam}) can be written as 
\begin{eqnarray}
\label{eqMiaoTam2}
-(\Delta f)g+Hess\,f-f Ric=g.
\end{eqnarray}

We highlight that some explicit examples of Miao-Tam critical metrics are in the form of warped products. Those examples include the spatial Schwarzschild metrics and AdS-Schwarzschild metrics restricted to certain domains containing their horizon and bounded by two spherically symmetric spheres (cf. Corollaries 3.1 and 3.2 in \cite{miaotamTAMS}).

It is not to hard to show that critical metrics have constant scalar curvature \cite{miaotam}. In 2009, Miao and Tam were able to prove that these metrics arise as critical points of the volume functional on $M^n$ when restricted to the class of metrics $g$ with prescribed constant scalar curvature such that $g\mid_{T \partial M}=h$ for a prescribed Riemannian metric $h$  on the boundary.

Here, we call attention to the paragraph where Miao and Tam \cite{miaotamTAMS} wrote:

\begin{flushright}
\begin{minipage}[t]{4.37in}
 \emph{``we want to know if there exist non-constant sectional curvature critical metrics on a compact manifold whose boundary is isometric to standard sphere. If yes, what can we say about the structure of such metrics?"} 
 \end{minipage}
\end{flushright}

Indeed, they studied these critical metrics under Einstein and conformally flat assumptions. In particular, they proved that a connected, compact, Einstein manifold $(M^n ,\,g)$ with smooth boundary that satisfies  (\ref{eqMiaoTam2}) must be isometric to a geodesic ball in a simply connected space form $\Bbb{R}^{n},$ $\Bbb{H}^{n}$ or $\Bbb{S}^{n}.$ Moreover, based on the techniques developed in a work of Kobayashi and Obata \cite{obata}, Miao and Tam showed that the result even is true replacing the Einstein condition by the assumption that $(M^n ,\,g)$ is locally conformally flat with boundary isometric to a standard sphere.  More precisely, they proved the following result.

\begin{theorem}[Miao-Tam, \cite{miaotamTAMS}]
\label{theoremMT}
Let $(M^n,g,f)$ be a  locally conformally flat simply connected, compact Miao-Tam critical metric with boundary isometric to a standard sphere $\Bbb{S}^{n-1}.$ Then $(M^n ,\,g)$ is isometric to a geodesic ball in a simply connected space form $\Bbb{R}^{n},$ $\Bbb{H}^{n}$ or $\Bbb{S}^{n}$.
\end{theorem}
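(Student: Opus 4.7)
The plan is to adapt the splitting argument of Kobayashi--Obata for conformally flat systems with a potential. As preliminary reductions, I would trace (\ref{eqMiaoTam2}) to obtain $(1-n)\Delta f - fR = n$; combined with the fact that Miao--Tam critical metrics have constant scalar curvature, this makes $\Delta f$ an affine function of $f$, so $\nabla(\Delta f) = -\tfrac{R}{n-1}\nabla f$. Rewriting (\ref{eqMiaoTam2}) in the form $\mathrm{Hess}\,f = f\,\mathrm{Ric} + (\Delta f + 1)g$ expresses $\mathrm{Hess}\,f$ as a Ricci part plus a pure-trace part. The locally conformally flat hypothesis, together with $\nabla R = 0$, implies the Codazzi condition $\nabla_i R_{jk} = \nabla_j R_{ik}$ for the Ricci tensor, and it also lets one expand the full Riemann tensor as a Kulkarni--Nomizu product of $g$ with the Schouten tensor, so every curvature quantity becomes expressible in terms of $\mathrm{Ric}$ and $g$ alone.

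The core of the argument is to show that $\nabla f$ is an eigenvector of $\mathrm{Ric}$ on the open set $U = \{df \neq 0\}$. For this I would take one more covariant derivative of $\mathrm{Hess}\,f = f\,\mathrm{Ric} + (\Delta f + 1)g$, antisymmetrize in the first two indices, and apply the Ricci identity $\nabla_i\nabla_j\nabla_k f - \nabla_j\nabla_i\nabla_k f = R_{ijkl}\nabla^l f$. The Codazzi condition kills the $f(\nabla_i R_{jk}-\nabla_j R_{ik})$ contribution, leaving an equation for $R_{ijkl}\nabla^l f$ in terms of $\mathrm{Ric}\wedge df$ and pure $g$-terms. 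Substituting the LCF decomposition for $R_{ijkl}\nabla^l f$ and collecting, the Ricci contributions combine with coefficient $(n-1)$, and the residual identity forces $\mathrm{Ric}(\nabla f,\cdot) = \mu\, df$ for some function $\mu$. Constant scalar curvature then pins down $\mathrm{Ric}$ as having exactly two eigenvalues: $\mu$ in the direction $\nabla f$ and a second one, of multiplicity $n-1$, on its orthogonal complement.

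This two-eigenvalue structure combined with (\ref{eqMiaoTam2}) implies that each regular level set $\Sigma_c = f^{-1}(c)$ is totally umbilical, since the tangential part of $\mathrm{Hess}\,f$ on $\Sigma_c$ is a scalar multiple of the induced metric. Standard integration on $U$ then produces a warped product structure $g = dt^2 + \phi(t)^2 h$; LCF forces $h$ to be of constant sectional curvature, and the boundary condition $\partial M \cong \Bbb{S}^{n-1}$ identifies $h$ with the round sphere metric. The Miao--Tam equation reduces to an ODE for $\phi$ whose solutions, classified by the sign of $R$, are precisely the warping functions of geodesic balls in $\Bbb{R}^{n}$, $\Bbb{H}^{n}$, or $\Bbb{S}^{n}$.

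The main technical obstacle I foresee is the passage from the local warped-product description on $U$ to a global isometry. One must show that $f$ has a single interior critical point and that $\phi$ vanishes linearly there so that the ball closes smoothly, and one must use simple connectedness to exclude extra topology in the fiber direction. Controlling the behavior at the zeros of $\nabla f$ and gluing the local picture to produce the global isometry is the genuine analytic step; by contrast, the eigenvector computation, while notationally involved, is essentially formal once the LCF decomposition and the Codazzi identity for Ricci are in hand.
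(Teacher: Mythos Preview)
The present paper does not contain a proof of this theorem: it is quoted from Miao and Tam's article \cite{miaotamTAMS} and used as a black box in the proofs of Theorems~\ref{thm1} and~\ref{thm2}. There is therefore no ``paper's own proof'' to compare against. The paper does remark, just before stating the theorem, that Miao and Tam's argument is ``based on the techniques developed in a work of Kobayashi and Obata,'' and your proposal is exactly an outline of that Kobayashi--Obata splitting strategy, so you are on the same track as the original source.

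A few comments on your outline relative to what the present paper does establish. Your intermediate claims---that $\nabla f$ is an eigenvector of $\mathrm{Ric}$, that the Ricci tensor has two eigenvalues of multiplicities $1$ and $n-1$, and that the level sets are totally umbilical with constant mean curvature---are precisely the content of Proposition~\ref{propvanishT} here, proved under the hypothesis $T\equiv 0$. In the locally conformally flat setting one has $W\equiv 0$ and $C\equiv 0$, so Lemma~\ref{lemTCW} gives $T\equiv 0$ directly wherever $f\neq 0$; hence Proposition~\ref{propvanishT} already supplies most of the local structure you describe, and you could cite it rather than rederive the eigenvector computation. The genuine work beyond what this paper contains is the part you correctly flag as the main obstacle: passing from the local warped-product form to a global isometry with a geodesic ball, controlling the critical set of $f$, and matching the boundary to the standard sphere. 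That analysis is carried out in \cite{miaotamTAMS} and is not reproduced here.
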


It should be emphasized that the hypothesis that the boundary of $M^n$ is isometric to a standard sphere $\Bbb{S}^{n-1}$ considered by Miao and Tam is not artificial. To clarify this, we consider that the boundary of $M^n$ is totally geodesic and is isometric to a standard sphere $\Bbb{S}^{n-1}.$  Under these conditions,  motivated by the positive mass theorem, Min-Oo conjectured that if $M^n$ has scalar curvature at least $n(n-1),$ then $M^n$ must be isometric to the hemisphere $\Bbb{S}^{n}_{+}$ with standard metric (cf. \cite{MinO}). However, an elegant article due to Brendle, Marques and Neves  shows counterexamples to Min-Oo's Conjecture in dimensions $n\geq 3.$ For more details see \cite{marques}. We also highlight that  $\Bbb{S}^{n}_{+}$ satisfies (\ref{eqMiaoTam2}) for a suitable potential function (cf. \cite{miaotam} p. 153).

We now recall that  the Bach tensor on a Riemannian manifold $(M^n,g)$, $n\geq 4,$ which was introduced in the early 1920s to study conformal relativity, see \cite{bach},  is defined in term of the components of the Weyl tensor $W_{ikjl}$ as follows
\begin{equation}
\label{bach} B_{ij}=\frac{1}{n-3}\nabla^{k}\nabla^{l}W_{ikjl}+\frac{1}{n-2}R_{kl}W_{i}\,^{k}\,_{j}\,^{l},
\end{equation}
while for $n=3$ it is given by
\begin{equation}
\label{bach3} B_{ij}=\nabla^kC_{kij}.
\end{equation}
We say that $(M^n,g)$ is Bach-flat when $B_{ij}=0.$ On 4-dimensional compact manifolds, Bach-flat metrics are precisely critical points of the conformally invariant functional $\mathcal{W}(g)$ defined on the space of smooth Riemannian structures as follows
\begin{equation*}
\mathcal{W}(g)=\int_{M}|W_{g}|^2 dM_{g},
\end{equation*}
where $W_{g}$ denotes the Weyl tensor of $g$. It is not difficult to check that locally conformally flat metrics as well as Einstein metrics are Bach-flat. Recently, Cao and Chen have studied Bach-flat gradient Ricci solitons, more precisely, they showed a stronger classification for gradient Ricci solitons under the Bach-flat assumption. For more details, we refer the reader to \cite{CaoChen} and \cite{CaoChenT}.

It is well-known that 4-dimensional compact Riemannian manifolds have special behavior; for more details see for instance \cite{AMS}, \cite{besse} and \cite{scorpan}. Here, we shall investigate Bach-flat  critical metrics of the volume functional on 4-dimensional manifolds with boundary.  More precisely, we replace the assumption of locally conformally flat in the Miao-Tam result by the Bach-flat condition, which is weaker that the former. We now state our first result.

\begin{theorem}\label{thm1}
Let $(M^4,g,f)$ be a simply connected, compact Miao-Tam critical metric with boundary isometric to a standard sphere $\Bbb{S}^{3}.$ Then $(M^4 ,\,g)$ is isometric to a geodesic ball in a simply connected space form $\Bbb{R}^{4},$ $\Bbb{H}^{4}$ or $\Bbb{S}^{4}$ provided $$\int_M f^2B(\nabla f,\nabla f)dM_{g}\geq 0,$$ where $B$ is the Bach tensor.
\end{theorem}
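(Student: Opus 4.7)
The plan is to reduce \thmref{thm1} to the locally conformally flat case \thmref{theoremMT} by showing that the integral hypothesis, combined with the Miao-Tam equation, forces the Weyl tensor of $(M^4,g)$ to vanish identically. Using that Miao-Tam critical metrics have constant scalar curvature, I would first recast \eqref{eqMiaoTam2} in trace-free form,
\begin{equation*}
\mathrm{Hess}\,f - \frac{\Delta f}{4}g = f\,\mathring{\mathrm{Ric}},\qquad \mathring{\mathrm{Ric}} = \mathrm{Ric}-\tfrac{R}{4}g.
\end{equation*}
Differentiating this, applying the Ricci commutation identity $[\nabla_i,\nabla_j]\nabla_k f = R_{ijk}{}^{\ell}\nabla_\ell f$ to the third derivatives of $f$, and then decomposing the Riemann tensor into its Weyl and Schouten parts, I expect to obtain a pointwise identity of the schematic form
\begin{equation*}
f\,C_{ijk} = W_{ijk\ell}\,\nabla^\ell f + (\text{traceless-Ricci contractions with }\nabla f),
\end{equation*}
where $C_{ijk}=\nabla_i R_{jk}-\nabla_j R_{ik}$ is the Cotton tensor. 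This is the crucial algebraic link between $W$, $C$ and $\nabla f$.

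In dimension $4$, the Bach tensor satisfies
\begin{equation*}
B_{ij} = \tfrac{1}{2}\nabla^k C_{jik} + \tfrac{1}{2}R^{k\ell}W_{ikj\ell},
\end{equation*}
as a consequence of the second Bianchi identity together with the constancy of $R$. I would multiply $B(\nabla f,\nabla f)$ by $f^2$, integrate over $M^4$, and then integrate by parts in the $\nabla^k C_{jik}$ term to shift the divergence onto $f^2\nabla^i f\nabla^j f$; because $f\equiv 0$ on $\partial M$, every boundary term vanishes. Substituting the pointwise identity from the previous step and using the Miao-Tam equation once more to rewrite the residual Hessian contributions, I anticipate an integral identity of the form
\begin{equation*}
\int_M f^2\,B(\nabla f,\nabla f)\,dM_g = -\int_M f^2\,\bigl|W(\cdot,\nabla f,\cdot,\nabla f)\bigr|^2\,dM_g + (\text{further non-positive terms}),
\end{equation*}
whose right-hand side is manifestly non-positive.

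Combined with the hypothesis $\int_M f^2 B(\nabla f,\nabla f)\,dM_g\ge 0$, this forces $W(\cdot,\nabla f,\cdot,\nabla f)\equiv 0$ on $\{f\neq 0\}$, and hence on all of $M^4$ by continuity. Since $\nabla f$ is non-vanishing on a dense open set (as $f$ vanishes only on $\partial M$ and is non-constant), a short algebraic argument exploiting the trace-free property and the first Bianchi identity of $W$ upgrades this contracted vanishing to $W\equiv 0$. The metric is therefore locally conformally flat, and \thmref{theoremMT} immediately identifies $(M^4,g)$ with a geodesic ball in $\mathbb{R}^4$, $\mathbb{H}^4$ or $\mathbb{S}^4$. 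The main obstacle is the derivation of the integral identity in the second paragraph: the many cross-terms produced when substituting the Cotton identity must recombine into a sum of squares of definite sign, and this depends on a delicate algebraic interplay between the structure of the Bach tensor in dimension $4$, the second Bianchi identity, and the Miao-Tam equation.
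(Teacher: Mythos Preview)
Your overall strategy is right, and your anticipated identity $fC_{ijk}=W_{ijk\ell}\nabla^\ell f + T_{ijk}$ (with $T$ built from $\mathring{Ric}$ and $\nabla f$) is exactly \lemref{lemTCW}. The gap is in the form of the integral identity. When you multiply $B(\nabla f,\nabla f)$ by $f^2$, integrate by parts, and substitute the Cotton identity, all Weyl contributions cancel (the terms $W_{kijl}\nabla^k f\nabla^l f\nabla^i f\nabla^j f$ and $fW_{kijl}\nabla^k\nabla^l f\,\nabla^i f\nabla^j f$ vanish by symmetry and by the trace-freeness of $W$ against the Miao--Tam equation). What survives is
\[
\int_M f^2 B(\nabla f,\nabla f)\,dM_g \;=\; -\frac{1}{2(n-1)}\int_M f^2|T|^2\,dM_g,
\]
so the hypothesis forces $T\equiv 0$, not $W(\cdot,\nabla f,\cdot,\nabla f)\equiv 0$. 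From $T\equiv 0$ and \lemref{lemTCW} you only get $fC_{ijk}=W_{ijks}\nabla^s f$, which by itself says nothing about the vanishing of either side.

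The missing step is therefore $T\equiv 0 \Rightarrow C\equiv 0$, and this is not a short algebraic manipulation: it requires analyzing the level sets $\Sigma_c=\{f=c\}$ and showing that $T\equiv 0$ forces them to be totally umbilic with constant mean curvature and $\nabla f$ an eigenvector of $Ric$ (Proposition~\ref{propvanishT}), after which a coordinate computation (\lemref{lemcvanish}) kills each block $C_{ij1}$, $C_{abc}$, $C_{1ab}$ separately. Only once $C\equiv 0$ do you get $W_{ijk\ell}\nabla^\ell f=0$, and then your dimension-four trace argument finishes the job. A secondary point: the claim that $\nabla f\neq 0$ on a dense set does not follow merely from $f$ being non-constant with $f^{-1}(0)=\partial M$; the paper invokes analyticity of $(g,f)$ in harmonic coordinates to rule out an open set of critical points.
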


The proof of Theorem \ref{thm1} was inspired in the trend developed by Cao and Chen in \cite{CaoChen}.  In the sequel, as an immediate consequence of Theorem \ref{thm1} we deduce the following corollary.

\begin{corollary}
Let $(M^4,g,f)$ be a Bach-flat simply connected, compact Miao-Tam critical metric with boundary isometric to a standard sphere $\Bbb{S}^{3}.$ Then $(M^4 ,\,g)$ is isometric to a geodesic ball in a simply connected space form $\Bbb{R}^{4},$ $\Bbb{H}^{4}$ or $\Bbb{S}^{4}$.
\end{corollary}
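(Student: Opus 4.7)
The plan is to observe that this corollary is an essentially immediate consequence of Theorem \ref{thm1}. Under the Bach-flat hypothesis the tensor $B$ vanishes identically on $M^4$, so in particular $B(\nabla f,\nabla f)=0$ at every point of $M$. Consequently the integral
\begin{equation*}
\int_M f^2 B(\nabla f,\nabla f)\,dM_g = 0,
\end{equation*}
which trivially satisfies the inequality $\int_M f^2 B(\nabla f,\nabla f)\,dM_g \geq 0$ that appears as hypothesis in Theorem \ref{thm1}.

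Once this observation is in place, every hypothesis of Theorem \ref{thm1} is fulfilled: $(M^4,g,f)$ is a simply connected, compact Miao-Tam critical metric whose boundary is isometric to the standard $\Bbb{S}^3$, and the nonnegativity of the integral holds (indeed with equality). Invoking Theorem \ref{thm1} directly yields that $(M^4,g)$ is isometric to a geodesic ball in one of the simply connected space forms $\Bbb{R}^4$, $\Bbb{H}^4$, or $\Bbb{S}^4$, which is the desired conclusion.

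There is no genuine obstacle here; all of the analytical content is already packaged into Theorem \ref{thm1}. The only point worth emphasizing in the write-up is that this corollary is strictly stronger than the 4-dimensional case of Theorem \ref{theoremMT} of Miao and Tam: locally conformally flat metrics have vanishing Weyl tensor and hence, by the defining formula \eqref{bach}, automatically vanishing Bach tensor, so the Bach-flat assumption is genuinely weaker than the local conformal flatness used in \cite{miaotamTAMS}.
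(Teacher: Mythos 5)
Your proposal is correct and matches the paper exactly: the authors present this corollary as an immediate consequence of Theorem \ref{thm1}, since Bach-flatness gives $B\equiv 0$ and hence $\int_M f^2 B(\nabla f,\nabla f)\,dM_g=0\geq 0$. Nothing further is needed.
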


Based in the previous result, it is natural to ask what occurs in lower dimension. To do so, inspired in the ideas developed in \cite{CCCMM} (see also \cite{CaoChenT} and \cite{CaoChen}) we shall prove a rigidity result for a 3-dimensional Miao-Tam critical metric  with divergence-free Bach tensor, i.e. $div B= 0,$ and boundary isometric to a standard sphere $\Bbb{S}^2.$ Clearly, the assumption of divergence-free Bach tensor is weaker than the Bach-flat condition considered in Theorem \ref{thm1}. More precisely, we have the following result.

\begin{theorem}\label{thm2}
Let $(M^3,\,g,\,f)$ be a simply connected, compact Miao-Tam critical metric with boundary isometric to a standard sphere $\Bbb{S}^2.$ If $div B(\nabla f)=0$ in $M,$ where $B$ is the Bach tensor, then $(M^3,\,g)$ is isometric to a geodesic ball in a simply connected space form $\Bbb{R}^{3},$ $\Bbb{H}^{3}$ or $\Bbb{S}^3.$
\end{theorem}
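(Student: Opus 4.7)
The plan is to show that the Cotton tensor $C$ vanishes identically on $M^3$; since in dimension three $C\equiv 0$ is equivalent to $(M^3,g)$ being locally conformally flat, the conclusion will then follow at once from \thmref{theoremMT}.

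To exploit the hypothesis $\operatorname{div} B(\nabla f)\equiv 0$, the first step is to multiply by $f$ and integrate over $M$. Because $f=0$ on $\partial M$, integration by parts produces no boundary contribution, and substituting the Miao--Tam equation in the form $\operatorname{Hess} f=f\operatorname{Ric}+(1+\Delta f)g$ together with the tracelessness $\operatorname{tr} B=0$ kills the $(1+\Delta f)g$ term and gives
\begin{equation*}
0 \;=\; \int_M B(\nabla f,\nabla f)\,dM_g \;+\; \int_M f^2\langle B,\operatorname{Ric}\rangle\,dM_g.
\end{equation*}
Next, one uses the three-dimensional definition $B_{ij}=\nabla^k C_{kij}$ and integrates by parts once more in each summand. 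The boundary contributions again vanish --- in the first integral because $\nabla f$ is parallel to the outward normal $\nu$ on $\partial M$ combined with the cyclic identity $C_{kij}+C_{ijk}+C_{jki}=0$, which forces $C(\nu,\nu,\nu)=0$; and in the second because $f=0$ on $\partial M$. The antisymmetry and tracelessness of $C$ eliminate numerous symmetric-against-antisymmetric pairings, and the identity $\nabla^k R^{ij}\,C_{kij}=\tfrac12|C|^2$ (which follows from $C_{kij}=\nabla_k R_{ij}-\nabla_i R_{kj}$, valid because $R$ is constant) collapses the remaining terms to
\begin{equation*}
\int_M f\,C_{kij}R^{ij}\nabla^k f\,dM_g \;=\; -\tfrac12\int_M f^2|C|^2\,dM_g.
\end{equation*}

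The second ingredient is an explicit Cotton identity for 3-dimensional Miao--Tam critical metrics, to be obtained by commuting derivatives in $\nabla_k\nabla_i\nabla_j f$ via the Ricci commutator and applying the decomposition of the Riemann tensor in terms of Ricci available in dimension three, together with the scalar identity $\Delta f=-\tfrac12(3+fR)$ from the trace of the Miao--Tam equation. The expected outcome is
\begin{equation*}
fC_{kij}\;=\;2(\mathring R_{kj}\nabla_i f-\mathring R_{ij}\nabla_k f)+g_{kj}\mathring R_{il}\nabla^l f-g_{ij}\mathring R_{kl}\nabla^l f,
\end{equation*}
with $\mathring R=\operatorname{Ric}-\tfrac{R}{3}g$. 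A direct contraction yields $fC_{kij}R^{ij}\nabla^k f=3|\mathring R\,\nabla f|^2-2|\mathring R|^2|\nabla f|^2$, while squaring and simplifying gives $f^2|C|^2=8|\mathring R|^2|\nabla f|^2-12|\mathring R\,\nabla f|^2$. Hence $f^2|C|^2\equiv -4\,fC_{kij}R^{ij}\nabla^k f$ pointwise, which combined with the integration-by-parts identity above forces $\int_M f^2|C|^2\,dM_g=0$. Since $f$ has constant sign in the interior and vanishes only on $\partial M$, this yields $C\equiv 0$ on $M$, and \thmref{theoremMT} completes the argument.

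The principal obstacle is expected to be the derivation of the Cotton identity for $fC_{kij}$ and the numerical bookkeeping needed to reach the clean relation $f^2|C|^2=-4\,fC_{kij}R^{ij}\nabla^k f$: the four cross-terms arising in the expansion of $|fC_{kij}|^2$ must collapse, which requires the systematic use of the tracelessness of $\mathring R$ and the symmetry of $\operatorname{Ric}$ to kill symmetric-against-antisymmetric pairings. The compatibility between the scalar identity $\Delta f=-\tfrac12(3+fR)$ and the total tracelessness of $C$ must also be exploited consistently throughout the derivation of the Cotton identity itself.
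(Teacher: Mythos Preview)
Your argument is correct, but it takes a considerably more roundabout route than the paper's. The paper establishes the \emph{pointwise} identity
\[
(\operatorname{div} B)(\nabla f)=\frac{f}{4}\,|C|^{2}
\]
directly: commuting derivatives in $\nabla_i B_{ij}=\nabla_i\nabla_k C_{kij}$ and using that $W\equiv 0$ in dimension three gives $(\operatorname{div} B)(\nabla f)=-R_{ik}C_{jki}\nabla^{j}f$, and then the relation $fC_{ijk}=T_{ijk}$ (Lemma~\ref{lemTCW} with $W=0$) turns this into $\tfrac{f}{4}|C|^{2}$. From the hypothesis $(\operatorname{div} B)(\nabla f)=0$ one reads off $C\equiv 0$ on the dense set $\{f\neq 0\}$, hence everywhere, and Theorem~\ref{theoremMT} finishes.

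Your proposal recovers exactly the same ingredients but wraps them in two integrations by parts that are in fact unnecessary. Your ``Cotton identity'' $fC_{kij}=2(\mathring R_{kj}\nabla_i f-\mathring R_{ij}\nabla_k f)+g_{kj}\mathring R_{il}\nabla^{l}f-g_{ij}\mathring R_{kl}\nabla^{l}f$ is precisely $fC_{kij}=T_{kij}$ written in traceless--Ricci form (substitute $R_{ij}=\mathring R_{ij}+\tfrac{R}{3}g_{ij}$ in (\ref{T}) with $n=3$ and observe that the $g$-terms cancel). Your resulting pointwise relation $f^{2}|C|^{2}=-4\,fC_{kij}R^{ij}\nabla^{k}f$ is nothing but the paper's identity rewritten, since $-C_{kij}R^{ij}\nabla^{k}f=(\operatorname{div} B)(\nabla f)$. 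Had you computed this divergence directly, the integral identity $\int fC_{kij}R^{ij}\nabla^{k}f=-\tfrac12\int f^{2}|C|^{2}$ would never have been needed. What your approach buys is a self-contained derivation that avoids the Schouten--tensor bookkeeping in the paper's commutator computation; what it costs is two extra integrations by parts and the boundary checks that go with them. (Incidentally, for that boundary term you may simply invoke the antisymmetry $C_{kij}=-C_{ikj}$, which already forces $C_{kij}\nu^{k}\nu^{i}\nu^{j}=0$; the cyclic identity is not required.)
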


Finally, we get the following rigidity result.

\begin{corollary}
\label{cor2}
Let $(M^3,\,g,\,f)$ be a simply connected, compact Miao-Tam critical metric  with divergence-free Bach tensor and boundary isometric to a standard sphere $\Bbb{S}^2.$ Then $(M^3,\,g)$ is isometric to a geodesic ball in a simply connected space form $\Bbb{R}^{3},$ $\Bbb{H}^{3}$ or $\Bbb{S}^3.$
\end{corollary}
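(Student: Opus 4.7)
The plan is very short because Corollary~\ref{cor2} is an immediate weakening of Theorem~\ref{thm2}. My proof will consist of a single reduction: verify that the hypothesis ``divergence-free Bach tensor'' implies the pointwise contraction hypothesis $\operatorname{div} B(\nabla f)=0$ appearing in Theorem~\ref{thm2}, and then quote that theorem verbatim.

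More concretely, I would read the expression $\operatorname{div} B(\nabla f)$ in Theorem~\ref{thm2} as the scalar function obtained by contracting the $1$-form $\operatorname{div} B$, whose components are $\nabla^{i} B_{ij}$, with the gradient vector field $\nabla f$; that is,
\begin{equation*}
\operatorname{div} B(\nabla f) \;=\; (\nabla^{i} B_{ij})\, \nabla^{j} f.
\end{equation*}
Under the assumption that $B$ is divergence-free, the $1$-form $\operatorname{div} B$ vanishes identically on $M^{3}$, so the contraction above is trivially zero at every point regardless of the gradient $\nabla f$. Thus every $3$-dimensional Miao--Tam critical metric with $\operatorname{div} B=0$ satisfies the hypothesis of Theorem~\ref{thm2}.

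Combined with the remaining assumptions carried over directly from the statement of Corollary~\ref{cor2}, namely simple connectedness, compactness, the Miao--Tam critical metric structure, and boundary isometric to the standard sphere $\mathbb{S}^{2}$, Theorem~\ref{thm2} applies and yields that $(M^{3},g)$ is isometric to a geodesic ball in $\mathbb{R}^{3}$, $\mathbb{H}^{3}$, or $\mathbb{S}^{3}$.

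There is no genuine obstacle here: the entire content of the corollary lies in Theorem~\ref{thm2}, and the only thing to check is that the pointwise vanishing condition used there is implied by global divergence-freeness of $B$, which is formally immediate from the definition of the contraction. Consequently I would write the proof as a one-line remark: ``\emph{If $\operatorname{div} B=0$ on $M$, then a fortiori $\operatorname{div} B(\nabla f)=0$ on $M$, and the conclusion follows from Theorem~\ref{thm2}.}''
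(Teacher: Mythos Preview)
Your proposal is correct and matches the paper's approach exactly: the paper does not even write out a separate proof of Corollary~\ref{cor2}, treating it as an immediate consequence of Theorem~\ref{thm2} for precisely the reason you give, namely that $\operatorname{div}B=0$ trivially implies $\operatorname{div}B(\nabla f)=0$.
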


\section{Preliminaries and Key Lemmas}

In this section we shall present a couple of  lemmas that will be useful in the proof of our main results.  We begin recalling that  $$ \mathfrak{L}_{g}^{*}(f)=-(\Delta f)g+Hess\,f-f Ric.$$ So, as it was previously mentioned the fundamental equation of a Miao-Tam critical metric (\ref{eqMiaoTam}) becomes
\begin{equation}
\label{eqfund1}
-(\Delta f)g+Hess f-fRic=g.
\end{equation}
Tracing (\ref{eqfund1}) we arrive at
\begin{equation}
\label{eqtrace}
(n-1)\Delta f+Rf=-n.
\end{equation}
Moreover, by using (\ref{eqtrace}) it is not difficult to check that
\begin{equation}
\label{IdRicHess} f\mathring{Ric}=\mathring{Hess} f,
\end{equation} where $\mathring{T}$ stands for the traceless of $T.$

For simplicity, we now rewrite equation (\ref{eqfund1}) in the tensorial language as follows
\begin{equation}
\label{fundeqtens} -(\Delta f)g_{ij}+\nabla_{i}\nabla_{j}f -fR_{ij}=g_{ij}.
\end{equation}

Next, since a Miao-Tam critical metric has constant scalar curvature (cf. \cite{miaotam}), we use the last identity in order to obtain the following lemma.
\begin{lemma}
\label{lem1} Let $\big(M^n,\,g,\,f)$ be a Miao-Tam critical metric. Then
\begin{equation*}
f\big(\nabla_{i}R_{jk}-\nabla_{j}R_{ik}\big)=R_{ijks}\nabla^{s}f + \frac{R}{n-1}\big(\nabla_{i}f
g_{jk}-\nabla_{j}f g_{ik}\big)- \big(\nabla_{i}f R_{jk}-\nabla_{j}f R_{ik}\big).
\end{equation*}
\end{lemma}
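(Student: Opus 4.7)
The plan is to derive the identity by differentiating the fundamental equation (\ref{fundeqtens}) and then antisymmetrizing in the appropriate indices so that the Hessian terms become a commutator to which the Ricci identity applies. The only non-trivial input besides the Ricci identity is constant scalar curvature, which converts $\nabla(\Delta f)$ into a multiple of $\nabla f$ via the trace relation (\ref{eqtrace}).

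First, I would apply $\nabla_i$ to equation (\ref{fundeqtens}) to obtain
\begin{equation*}
-\nabla_i(\Delta f)\,g_{jk}+\nabla_i\nabla_j\nabla_k f -\nabla_i f\,R_{jk}-f\nabla_i R_{jk}=0,
\end{equation*}
and then write the analogous identity with $i$ and $j$ swapped. Subtracting, the $g_{jk}$-symmetric term survives as $-\nabla_i(\Delta f)g_{jk}+\nabla_j(\Delta f)g_{ik}$, the mixed Hessian pieces collapse to the commutator $(\nabla_i\nabla_j-\nabla_j\nabla_i)\nabla_k f$, and the Ricci tensor contributions assemble into $-(\nabla_i f\,R_{jk}-\nabla_j f\,R_{ik})-f(\nabla_i R_{jk}-\nabla_j R_{ik})$.

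Next I would plug in the two standard facts. By the Ricci identity applied to the $1$-form $\nabla f$,
\begin{equation*}
(\nabla_i\nabla_j-\nabla_j\nabla_i)\nabla_k f = R_{ijks}\nabla^s f.
\end{equation*}
And since a Miao--Tam critical metric has constant scalar curvature, differentiating (\ref{eqtrace}) gives $(n-1)\nabla_i(\Delta f) = -R\,\nabla_i f$, hence
\begin{equation*}
-\nabla_i(\Delta f)g_{jk}+\nabla_j(\Delta f)g_{ik}=\frac{R}{n-1}\bigl(\nabla_i f\,g_{jk}-\nabla_j f\,g_{ik}\bigr).
\end{equation*}
Substituting both into the antisymmetrized derivative of (\ref{fundeqtens}) and solving for $f(\nabla_i R_{jk}-\nabla_j R_{ik})$ yields the claimed identity.

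There is essentially no serious obstacle here; the whole argument is bookkeeping once the two ingredients above are in place. The only thing to be careful about is the sign convention for $R_{ijks}$, so I would fix the convention once at the start of the section (the paper already uses $\nabla_i\nabla_j\nabla_k f-\nabla_j\nabla_i\nabla_k f=R_{ijks}\nabla^s f$ implicitly in the statement) so that the Ricci identity lands with the same sign as the formula in the lemma.
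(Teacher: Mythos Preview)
Your proof is correct and follows essentially the same approach as the paper: differentiate the fundamental equation (\ref{fundeqtens}), use the constancy of $R$ together with (\ref{eqtrace}) to rewrite $\nabla_i(\Delta f)$, and then antisymmetrize and apply the Ricci identity. The only cosmetic difference is that the paper differentiates the product $fR_{jk}$ first and antisymmetrizes at the end, whereas you antisymmetrize immediately; the ingredients and logic are identical.
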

\begin{proof} Computing $\nabla_{i}(fR_{jk})$ with the aid of (\ref{fundeqtens}) we infer
\begin{equation}
\label{eq1lem1}
(\nabla_{i}f)R_{jk}+f\nabla_{i}R_{jk}=\nabla_{i}\nabla_{j}\nabla_{k}f-(\nabla_{i}\Delta f)g_{jk}.
\end{equation}
Since $R$ is constant (\ref{eqtrace}) yields $\nabla_{i}\Delta f=-\frac{R}{n-1}\nabla_{i}f$. Hence we use this data in (\ref{eq1lem1}) to deduce
\begin{equation}
\label{eq2lem2}
f\nabla_{i}R_{jk}=-(\nabla_{i}f)R_{jk}+\nabla_{i}\nabla_{j}\nabla_{k}f+\frac{R}{n-1}\nabla_{i}fg_{jk}.
\end{equation}
Therefore, it suffices to apply the Ricci identity to finish the proof of the lemma.
\end{proof}

To fix notations we recall three special tensors in the study of curvature for a Riemannian manifold $(M^n,\,g),\,n\ge 3.$  The first one is the Weyl tensor $W$ which is defined by the following decomposition formula
\begin{eqnarray}
\label{weyl}
R_{ijkl}&=&W_{ijkl}+\frac{1}{n-2}\big(R_{ik}g_{jl}+R_{jl}g_{ik}-R_{il}g_{jk}-R_{jk}g_{il}\big) \nonumber\\
 &&-\frac{R}{(n-1)(n-2)}\big(g_{jl}g_{ik}-g_{il}g_{jk}\big),
\end{eqnarray}
where $R_{ijkl}$ stands for the Riemann curvature operator, whereas the second one is the Cotton tensor $C$ given by
\begin{equation}
\label{cotton} \displaystyle{C_{ijk}=\nabla_{i}R_{jk}-\nabla_{j}R_{ik}-\frac{1}{2(n-1)}\big(\nabla_{i}R
g_{jk}-\nabla_{j}R g_{ik}).}
\end{equation}
These two tensors are related as follows
\begin{equation}
\label{cottonwyel} \displaystyle{C_{ijk}=-\frac{(n-2)}{(n-3)}\nabla_{l}W_{ijkl},}
\end{equation}provided $n\ge 4.$
Finally, the Schouten tensor  $A$ is defined by
\begin{equation}
\label{schouten} A_{ij}=\frac{1}{n-2}\left(R_{ij}-\frac{R}{2(n-1)}g_{ij}\right).
\end{equation}

Combining equations (\ref{weyl}) and (\ref{schouten}) we have the following splitting
\begin{equation}
\label{WS} R_{ijkl}=\frac{1}{n-2}(A\odot g)_{ijkl}+W_{ijkl},
\end{equation}
where $\odot$ is the Kulkarni-Nomizu product.  For more details about these tensors, we refer to \cite{besse}.

From now on we introduce the covariant 3-tensor $T_{ijk}$ by

\begin{eqnarray}
\label{T}
T_{ijk}&=&\frac{n-1}{n-2}\left(R_{ik}\nabla_{j}f-R_{jk}\nabla_{i}f\right)
-\frac{R}{n-2}\left(g_{ik}\nabla_{j}f-g_{jk}\nabla_{i}f\right)\nonumber\\
 &&+\frac{1}{n-2}\left(g_{ik}R_{js}\nabla^{s}f-g_{jk}R_{is}\nabla^{s}f\right).
\end{eqnarray}
It is important to highlight that $T_{ijk}$ was defined similarly to $D_{ijk}$ in \cite{CaoChen}.  Now, we may announce our second lemma.

\begin{lemma}\label{lemTCW} Let $(M^n,g,f)$ be a Miao-Tam critical metric. Then the following identity holds:
\begin{equation*}
fC_{ijk}=T_{ijk}+W_{ijks}\nabla^{s}f.
\end{equation*}
\end{lemma}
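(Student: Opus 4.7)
The plan is to obtain the identity by a direct computation that chains together the two pieces of information already at our disposal: Lemma~\ref{lem1} (which rewrites the commutator $\nabla_{i}R_{jk}-\nabla_{j}R_{ik}$), and the Weyl decomposition (\ref{weyl}) of the Riemann tensor (which expresses $R_{ijks}\nabla^{s}f$ in terms of $W_{ijks}\nabla^{s}f$ plus Ricci/scalar terms). No new geometric input is needed; the point is purely algebraic reorganization.

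First I would exploit the fact that a Miao-Tam critical metric has constant scalar curvature, so that the definition (\ref{cotton}) of the Cotton tensor collapses to
\begin{equation*}
C_{ijk}=\nabla_{i}R_{jk}-\nabla_{j}R_{ik}.
\end{equation*}
Multiplying by $f$ and applying Lemma~\ref{lem1}, one obtains
\begin{equation*}
fC_{ijk}=R_{ijks}\nabla^{s}f+\frac{R}{n-1}\bigl(\nabla_{i}f\,g_{jk}-\nabla_{j}f\,g_{ik}\bigr)-\bigl(\nabla_{i}f\,R_{jk}-\nabla_{j}f\,R_{ik}\bigr).
\end{equation*}

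Second, I would contract the Weyl decomposition (\ref{weyl}) with $\nabla^{s}f$ on the last index to get
\begin{eqnarray*}
R_{ijks}\nabla^{s}f&=&W_{ijks}\nabla^{s}f+\frac{1}{n-2}\bigl(R_{ik}\nabla_{j}f-R_{jk}\nabla_{i}f\bigr)\\
 &&+\frac{1}{n-2}\bigl(g_{ik}R_{js}\nabla^{s}f-g_{jk}R_{is}\nabla^{s}f\bigr)-\frac{R}{(n-1)(n-2)}\bigl(\nabla_{j}f\,g_{ik}-\nabla_{i}f\,g_{jk}\bigr),
\end{eqnarray*}
and substitute this expression back into the formula for $fC_{ijk}$.

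Finally, I would collect the resulting terms by type. The coefficient of $R_{ik}\nabla_{j}f-R_{jk}\nabla_{i}f$ becomes $\tfrac{1}{n-2}+1=\tfrac{n-1}{n-2}$, matching the first line of $T_{ijk}$. The two scalar-curvature contributions combine as $\tfrac{R}{(n-1)(n-2)}+\tfrac{R}{n-1}=\tfrac{R}{n-2}$, producing the $-\tfrac{R}{n-2}(g_{ik}\nabla_{j}f-g_{jk}\nabla_{i}f)$ piece of $T_{ijk}$ after the sign is tracked. The remaining cross terms $g_{ik}R_{js}\nabla^{s}f-g_{jk}R_{is}\nabla^{s}f$ already appear with the correct coefficient $\tfrac{1}{n-2}$, and the leftover term is exactly $W_{ijks}\nabla^{s}f$. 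The only real obstacle is keeping track of signs and index positions during this collection; there is no analytic difficulty.
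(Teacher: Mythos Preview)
Your proposal is correct and follows essentially the same route as the paper: both start from Lemma~\ref{lem1} (using that $R$ is constant so the Cotton tensor reduces to $\nabla_iR_{jk}-\nabla_jR_{ik}$), substitute the Weyl decomposition (\ref{weyl}) contracted with $\nabla^{s}f$, and then collect the Ricci, scalar, and cross terms to recover $T_{ijk}$. The only difference is cosmetic---the paper performs the collection in one displayed block rather than tracking coefficients separately---so there is nothing to add.
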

\begin{proof} First of all, we compare (\ref{cotton}) with Lemma \ref{lem1} to arrive at
\begin{equation}
\label{eq1pl2} fC_{ijk}=R_{ijks}\nabla^{s}f + \frac{R}{n-1}\big(\nabla_{i}f g_{jk}-\nabla_{j}f g_{ik}\big)-
\big(\nabla_{i}f R_{jk}-\nabla_{j}f R_{ik}\big).
\end{equation}
On the other hand, from  (\ref{weyl}) we obtain
\begin{eqnarray*}
R_{ijks}\nabla^{s}f&=&W_{ijks}\nabla^{s}f+\frac{1}{n-2}\big(R_{ik}g_{js}+R_{js}g_{ik}-R_{is}g_{jk}-R_{jk}g_{is}\big)\nabla^{s}f\\&&-\frac{R}{(n-1)(n-2)}\big(g_{js}g_{ik}-g_{is}g_{jk}\big)\nabla^{s}f.
\end{eqnarray*}
From this it follows that
\begin{eqnarray*}
fC_{ijk}&=&W_{ijks}\nabla^{s}f + \frac{(n-1)}{(n-2)}\big(R_{ik}\nabla_{j}f-R_{jk}\nabla_{i}f\big)-\frac{R}{(n-2)}\big(g_{ik}\nabla_{j}f - g_{jk}\nabla_{i}f\big)\\
&&+\frac{1}{n-2}\big(g_{ik}R_{js}\nabla^{s}f-g_{jk}R_{is}\nabla^{s}f\big)\\
 &=&T_{ijk}+W_{ijks}\nabla^{s}f,
\end{eqnarray*}which concludes the proof of the lemma.
\end{proof}

To simplify some computations we shall define a function $\rho$ on $M^n$ by
\begin{equation}\label{rho}
\rho=|\nabla f|^2+\frac{2}{n-1}f+\frac{R}{n-1}f^2.
\end{equation} We claim that 
\begin{equation}\label{gradrho}
\frac{1}{2}\nabla\rho=fRic(\nabla f).
\end{equation}Indeed, since $R$ is constant we have $
\frac{1}{2}\nabla\rho=Hess f(\nabla f) +\frac{1}{n-1}\nabla f +\frac{R}{n-1}f\nabla f.$ Next, we use (\ref{eqfund1}) and (\ref{eqtrace}) to obtain
$$
\frac{1}{2}\nabla\rho=Hess f(\nabla f) -(\Delta f+1)\nabla f=fRic(\nabla f),
$$
which settles our claim.

Proceeding we recall that, at regular points of a smooth function $f,$ the vector field $\nu=\frac{\nabla f}{\mid \nabla f \mid}$ is normal to $\Sigma_{c}=\{p\in M:f(p)=c\}.$ In particular the second fundamental form of $\Sigma_{c}$  is given by
\begin{equation}
\label{secff}
h_{ij}=-\langle \nabla_{e_{i}}\nu,e_{j}\rangle,
\end{equation}where $\{e_{1},\ldots,e_{n-1}\}$ is  an orthonormal frame on $\Sigma_{c}.$ Then the mean curvature computed at these points, denoted by  $H$, is given as follows

\begin{equation}
\label{meancurv1}
H=-\frac{1}{\mid \nabla f \mid}\sum_{i=1}^{n-1}Hessf (e_{i},e_{i}).
\end{equation}

We now follow  the trend of Cao and Chen  (cf. \cite{CaoChen} and  \cite{CaoChenT}) to study the level sets of the potential function of  Miao-Tam critical metrics. To this end, first, we  deduce a similar result concerning to the tensor $T$ defined by (\ref{T}) on the next lemma. 

\begin{lemma}\label{propT}
Let $(M^n,g,f)$ be a Miao-Tam critical metric. Let $\Sigma=\{f=f(p)\}$ be a level set of $f$. If $g_{ab}$ denotes  the induced metric on $\Sigma,$ then, at any point where $\nabla f\neq0,$ we have
\begin{equation*}
|fT|^2=\frac{2(n-1)^2}{(n-2)^2}|\nabla
f|^4\sum_{a,b=2}^{n}|h_{ab}-\frac{H}{n-1}g_{ab}|^2+\frac{n-1}{2(n-2)}|\nabla^{\Sigma}\rho|^2,
\end{equation*}
where $\rho$ is given by (\ref{rho}), $h_{ab}$ and $H$ are the second fundamental form and the mean curvature of $\Sigma,$ respectively, while $\nabla^{\Sigma}$ is the Riemannian connection of $\Sigma.$
\end{lemma}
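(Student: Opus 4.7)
The plan is to compute $|fT|^2$ pointwise in an adapted orthonormal frame on $\Sigma$. At a point $p$ with $\nabla f \neq 0$, pick $\{e_1,\dots,e_n\}$ orthonormal at $p$ with $e_1 = \nabla f/|\nabla f|$ and $e_2,\dots,e_n$ tangent to $T_p\Sigma$. In such a frame $\nabla_j f = |\nabla f|\,\delta_{j1}$, which collapses the definition (\ref{T}) so that $T_{ijk}$ involves only the tangential Ricci components $R_{ab}$, the transversal Ricci components $R_{a1}$, and the scalar $R - R_{11}$. Exploiting the antisymmetry $T_{ijk} = -T_{jik}$ to reduce double-counting, the norm decomposes as
\begin{equation*}
|T|^2 \;=\; 2\sum_{a=2}^{n} T_{1a1}^2 \;+\; 2\sum_{a,b=2}^{n} T_{1ab}^2 \;+\; \sum_{a,b,c=2}^{n} T_{abc}^2.
\end{equation*}

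The next step is to convert each block into geometric data on $\Sigma$. For the tangential block $T_{1ab}$, I would feed the Miao--Tam equation (\ref{eqfund1}) applied to the pair $(e_a,e_b)$ together with the identity $\mathrm{Hess}\,f(e_a,e_b) = |\nabla f|\,g(\nabla_{e_a} e_1,e_b) = -|\nabla f|\,h_{ab}$; this yields $f R_{ab} = -(\Delta f + 1)\delta_{ab} - |\nabla f|\,h_{ab}$. Taking the $(n-1)$-dimensional trace and recalling $\sum_{a\ge 2} h_{aa} = H$ from (\ref{meancurv1}) then isolates the traceless parts,
\begin{equation*}
R_{ab} - \tfrac{R - R_{11}}{n-1}\,\delta_{ab} \;=\; -\tfrac{|\nabla f|}{f}\Big(h_{ab} - \tfrac{H}{n-1}\,\delta_{ab}\Big).
\end{equation*}
Substituting this into the reduced expression $T_{1ab} = -\tfrac{|\nabla f|}{n-2}\bigl[(n-1)R_{ab} - (R - R_{11})\delta_{ab}\bigr]$ converts $2\sum_{a,b} T_{1ab}^2$ into $\frac{2(n-1)^2 |\nabla f|^4}{(n-2)^2 f^2}\,|\mathring h|^2$, which is exactly the first summand of the claimed identity (after multiplying by $f^2$).

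For the two transversal blocks the key input is (\ref{gradrho}), which in the chosen frame reads $\nabla_a \rho = 2 f |\nabla f|\,R_{a1}$ for $a \ge 2$, hence
\begin{equation*}
\sum_{a=2}^{n} R_{a1}^2 \;=\; \frac{|\nabla^\Sigma \rho|^2}{4 f^2 |\nabla f|^2}.
\end{equation*}
From (\ref{T}) one reads off $T_{1a1} = -|\nabla f|\,R_{a1}$ and $T_{abc} = \tfrac{|\nabla f|}{n-2}\bigl(\delta_{ac} R_{b1} - \delta_{bc} R_{a1}\bigr)$, so a direct index contraction gives $2\sum_{a} T_{1a1}^2 = \tfrac{1}{2 f^2}|\nabla^\Sigma \rho|^2$ and $\sum_{a,b,c} T_{abc}^2 = \tfrac{1}{2(n-2) f^2}|\nabla^\Sigma \rho|^2$, whose sum collapses to $\tfrac{n-1}{2(n-2) f^2}|\nabla^\Sigma \rho|^2$. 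Multiplying everything by $f^2$ and adding to the tangential contribution produces the stated identity. I expect the only real obstacle to be careful bookkeeping in the sum $\sum_{a,b,c}(\delta_{ac} R_{b1} - \delta_{bc} R_{a1})^2$, whose three expanded pieces combine to give precisely the factor $2(n-2)$ responsible for the coefficient $\tfrac{n-1}{2(n-2)}$; conceptually, the proof is just the decoupling of $T$ into a tangential piece controlled by $\mathring h$ via (\ref{eqfund1}) and a transversal piece controlled by $\nabla^\Sigma \rho$ via (\ref{gradrho}).
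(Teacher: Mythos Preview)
Your proof is correct and takes a different organizational route from the paper's. The paper expands $|T|^2$ globally from (\ref{T}) as a polynomial in $|Ric|^2$, $|Ric(\nabla f)|^2$, $Ric(\nabla f,\nabla f)$ and $R$, rewrites it via (\ref{gradrho}) to obtain an expression in $\nabla\rho$ (their equation (\ref{eqfT})), then separately computes $|\mathring h|^2$ from the formula $h_{ab}=-\frac{1}{|\nabla f|}\big[fR_{ab}-\tfrac{1+fR}{n-1}\delta_{ab}\big]$, solves the result for $f^2|Ric|^2$, and substitutes back. Your approach instead exploits the block structure of $T$ in the adapted frame: the tangential block $T_{1ab}$ is identified directly with a scalar multiple of the traceless tangential Ricci, hence of $\mathring h$ via (\ref{eqfund1}), while the remaining blocks $T_{1a1}$ and $T_{abc}$ are built purely from $R_{1a}$ and hence from $\nabla^\Sigma\rho$ via (\ref{gradrho}). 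This makes the two summands in the identity emerge from orthogonal pieces of $T$ rather than from comparing two long scalar expressions, which is conceptually cleaner; the paper's route, in exchange, never needs the explicit componentwise formulas for $T_{ijk}$. One small omission in your write-up: the displayed decomposition of $|T|^2$ silently drops the block $\sum_{a,b\ge 2}T_{ab1}^2$. This block does vanish (taking $i=a$, $j=b$, $k=1$ with $a,b\ge 2$ in (\ref{T}) gives zero since $\nabla_a f=\nabla_b f=0$ and $\delta_{a1}=\delta_{b1}=0$), but it should be stated explicitly.
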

\begin{proof}
We consider  an orthonormal frame $\{e_{1},e_2,\ldots,e_{n}\}$ with $e_1=\frac{\nabla f}{|\nabla f|}$ and $e_2,\ldots,e_n$ tangent to $\Sigma.$ A straightforward computation allows us to deduce
\begin{eqnarray*}
|T|^2&=&\frac{2(n-1)^2}{(n-2)^2}(|Ric|^2|\nabla f|^2-|Ric(\nabla f)|^2)+\frac{2(n-1)R^2}{(n-2)^2}|\nabla f|^2 \\&&+\frac{2(n-1)}{(n-2)^2}|Ric(\nabla f)|^2-\frac{4(n-1)R}{(n-2)^2}(R|\nabla f|^2-Ric(\nabla f,\nabla f))\\&&+\frac{4(n-1)}{(n-2)^2}(RRic(\nabla f,\nabla f)-|Ric(\nabla f)|^2)-\frac{4(n-1)R}{(n-2)^2}Ric(\nabla f,\nabla f).
\end{eqnarray*}
Proceeding we can use (\ref{gradrho}) to obtain
\begin{eqnarray}
\label{eqfT} |fT|^2&=&\frac{2(n-1)^2}{(n-2)^2}f^2 |\nabla f|^2 |Ric|^2 -\frac{n(n-1)}{2(n-2)^2}|\nabla\rho|^2\nonumber\\&
&-\frac{2(n-1)R^2}{(n-2)^2}f^2 |\nabla f|^2+\frac{2(n-1)R}{(n-2)^2}f\langle\nabla\rho,\nabla f\rangle.
\end{eqnarray}

On the other hand, the second fundamental form $h_{ab}$ of the level set $\Sigma$ as well as its mean curvature $H,$ are given, respectively, by
\begin{equation*}
h_{ab}=-\left\langle\nabla_{e_a}\left(\frac{\nabla f}{|\nabla f|}\right),e_b\right\rangle=-\frac{1}{|\nabla
f|}\nabla_a\nabla_b f=-\frac{1}{|\nabla f|}\left[fR_{ab}-\Big(\frac{1}{n-1}+\frac{fR}{n-1}\Big)g_{ab}\right]\end{equation*}
and
\begin{equation*}
H=-\frac{1}{|\nabla f|}(fR-fR_{11}-fR-1)=\frac{1}{|\nabla f|}(fR_{11}+1).
\end{equation*}
Whence, we deduce
\begin{eqnarray*}
|h|^2 &=&\frac{1}{|\nabla f|^2}\left[f^2|Ric|^2-2f^2\sum_{a=2}^{n}R_{1a}^{2}-f^2R_{11}^{2}-2f(R-R_{11})\Big(\frac{1}{n-1}+\frac{fR}{n-1}\Big)\right]\\&&+\frac{1}{|\nabla f|^2}\frac{(fR+1)^2}{n-1}
\end{eqnarray*}
and
\begin{equation*}
H^2=\frac{1}{|\nabla f|^2}\left(f^2 R_{11}^{2}+2fR_{11}+1\right).
\end{equation*}
After some computations we obtain
\begin{eqnarray}
\label{eq123}
\sum_{a,b=2}^{n}|h_{ab}-\frac{H}{n-1}g_{ab}|^2 &=&\frac{1}{|\nabla f|^2}\left[f^2|Ric|^2-\Big(\frac{nR_{11}^{2}f^2+f^2 R^2-2f^2 RR_{11}}{n-1}\Big)\right]\nonumber\\&&-\frac{2f^2}{|\nabla f|^2}\sum_{a=2}^{n}R_{1a}^{2}.
\end{eqnarray}

On the other hand, by using once more (\ref{gradrho}) we get
$$
fR_{11}=\frac{1}{|\nabla f|^2}fRic(\nabla f,\nabla f)=\frac{1}{2|\nabla f|^2}\langle\nabla\rho,\nabla
f\rangle$$
and
$$fR_{1a}=\frac{1}{|\nabla f|}fRic(\nabla f,e_{a})=\frac{1}{2|\nabla f|}\langle\nabla\rho,e_{a}\rangle=\frac{1}{2|\nabla f|}\nabla_{a}\rho.$$

Substituting the last two identities in (\ref{eq123}) we obtain
\begin{eqnarray*}
\sum_{a,b=2}^{n}|h_{ab}-\frac{H}{n-1}g_{ab}|^2&=&\frac{1}{|\nabla f|^2}\Big(f^2|Ric|^2-\frac{1}{2|\nabla
f|^2}|\nabla^{\Sigma}\rho|^2-\frac{R^2 f^2}{n-1}\\&-&\frac{n}{4(n-1)|\nabla f|^4}\langle\nabla\rho,\nabla f\rangle^2+\frac{R f}{(n-1)|\nabla f|^2}\langle\nabla\rho,\nabla f\rangle\Big),
\end{eqnarray*}
which can be rewritten as
\begin{eqnarray}
\label{eq235}
f^2|Ric|^2&=&|\nabla f|^2\sum_{a,b=2}^{n} |h_{ab}-\frac{H}{n-1}g_{ab}|^2+\frac{n}{4(n-1)|\nabla f|^4}\langle\nabla\rho,\nabla f\rangle^2\nonumber\\
 &&+\frac{1}{2|\nabla
f|^2}|\nabla^{\Sigma}\rho|^2+\frac{R^2 f^2}{n-1}-\frac{R f}{(n-1)|\nabla f|^2}\langle\nabla\rho,\nabla f\rangle.
\end{eqnarray}
Finally, comparing (\ref{eqfT}) with (\ref{eq235}), we deduce
\begin{equation*}
|fT|^2=\frac{2(n-1)^2}{(n-2)^2}|\nabla
 f|^4 \sum_{a,b=2}^{n}|h_{ab}-\frac{H}{n-1}g_{ab}|^2+\frac{n-1}{2(n-2)}|\nabla^{\Sigma}\rho|^2,
\end{equation*}
which completes the proof of the lemma.
\end{proof}

We point out that some of these calculations above were also done in \cite{br} and \cite{QingYuan} in a different context to study the CPE conjecture (cf. Besse \cite{besse}, page 128).

Next, as a consequence of  Lemma \ref{propT} we derive the following properties concerning a level set of the quoted metrics.

\begin{proposition}\label{propvanishT}
Let $(M^n,\,g,\,f)$ be a Miao-Tam critical metric with $T\equiv 0.$ Let $c$ be a regular value of $f$ and
$\Sigma=\{p\in M;\,f(p)=c\}$ be a level set of $f.$ We consider $e_1=\frac{\nabla f}{|\nabla f|}$ and choose
an orthonormal frame $\{e_2,\ldots,e_n\}$ tangent to  $\Sigma.$ Under these conditions the following assertions hold.
\begin{enumerate}
\item \label{item1pT} The second fundamental form $h_{ab}$ of $\Sigma$ is $h_{ab}=\frac{H}{n-1}g_{ab}.$
\item \label{item2pT} $|\nabla f|$ is constant on $\Sigma.$
\item \label{item3pT} $R_{1a}=0$ for any $a\geq2$ and $e_1$ is an eigenvector of $Ric.$
\item  \label{item4pT} The mean curvature of $\Sigma$ is constant. 
\item \label{item5pT} On $\Sigma,$ the Ricci tensor either has a unique eigenvalue or two distinct eigenvalues with
multiplicity $1$ and $n-1,$ respectively. Moreover, the eigenvalue with multiplicity $1$ is in the direction of $\nabla f.$
\item \label{item6pT} $R_{1abc}=0$, for $a,b,c\in\{2,\ldots,n\}.$
\end{enumerate}
\end{proposition}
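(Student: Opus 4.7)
The plan is to combine the pointwise identity of Lemma~\ref{propT} with a direct component-wise analysis of $T_{ijk}=0$ in the adapted frame $\{e_1=\nabla f/|\nabla f|,\,e_2,\ldots,e_n\}$, and then invoke the Codazzi equation once umbilicity of $\Sigma$ has been established.

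For items~(\ref{item1pT}) and (\ref{item2pT}) I would feed $T\equiv 0$ into Lemma~\ref{propT}: the identity there forces both nonnegative summands on the right-hand side to vanish at every regular point, so on the one hand $h_{ab}=\tfrac{H}{n-1}g_{ab}$ (item~(\ref{item1pT})), and on the other hand $\nabla^{\Sigma}\rho=0$. Since $f$ is constant on $\Sigma$, the defining formula $\rho=|\nabla f|^{2}+\tfrac{2}{n-1}f+\tfrac{R}{n-1}f^{2}$ then shows $|\nabla f|$ is constant on $\Sigma$, which is item~(\ref{item2pT}).

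For items~(\ref{item3pT}) and (\ref{item5pT}) I would read off two distinguished components of $T_{ijk}=0$ straight from the definition~(\ref{T}), using $\nabla_{1}f=|\nabla f|$ and $\nabla_{a}f=0$ for $a\ge 2$. The component $T_{1a1}=0$ with $a\ge 2$ collapses (after the scalar-curvature term drops identically) to $-|\nabla f|R_{1a}=0$, giving item~(\ref{item3pT}). The component $T_{1ab}=0$ with $a,b\ge 2$ reduces to $(n-1)R_{ab}=(R-R_{11})g_{ab}$, so $Ric|_{T\Sigma}$ is a scalar multiple of the induced metric; combined with item~(\ref{item3pT}) this yields the eigenvalue structure asserted in item~(\ref{item5pT}), namely eigenvalue $R_{11}$ along $\nabla f$ and eigenvalue $(R-R_{11})/(n-1)$ with multiplicity $n-1$ on $T\Sigma$.

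For items~(\ref{item4pT}) and (\ref{item6pT}) I would apply the Codazzi equation to the umbilical $\Sigma$. Under $h_{ab}=\tfrac{H}{n-1}g_{ab}$ it reduces to
\[
R(X,Y,Z,\nu)=\tfrac{1}{n-1}\bigl(X(H)\,g(Y,Z)-Y(H)\,g(X,Z)\bigr)
\]
for $X,Y,Z$ tangent to $\Sigma$. Tracing in $Y,Z$ gives $Ric(X,\nu)=\tfrac{n-2}{n-1}X(H)$; item~(\ref{item3pT}) makes the left-hand side vanish for every tangent $X$, forcing $H$ to be constant on $\Sigma$, which is item~(\ref{item4pT}). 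Plugging $X(H)=0$ back into the Codazzi identity gives $R_{1abc}=0$ for $a,b,c\in\{2,\ldots,n\}$, i.e.\ item~(\ref{item6pT}). The main calculational work is the bookkeeping of components of $T$ in the adapted frame that produces the clean identities $T_{1a1}\propto R_{1a}$ and $T_{1ab}\propto (n-1)R_{ab}-(R-R_{11})g_{ab}$; no serious conceptual obstacle is anticipated once Lemma~\ref{propT} is in hand.
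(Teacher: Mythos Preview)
Your proposal is correct and follows essentially the same route as the paper: items~(\ref{item1pT}) and (\ref{item2pT}) come from Lemma~\ref{propT} together with the definition of $\rho$, items~(\ref{item3pT}) and (\ref{item5pT}) from reading off suitable components of $T_{ijk}=0$ in the adapted frame, and items~(\ref{item4pT}) and (\ref{item6pT}) from the Codazzi equation applied to the umbilical $\Sigma$. The only cosmetic differences are that the paper extracts $R_{1a}=0$ from the contraction $T(e_i,\nabla f,\nabla f)=0$ rather than the single component $T_{1a1}$, and uses $T_{a1b}$ instead of $T_{1ab}$ (which differ only by a sign) for item~(\ref{item5pT}).
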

\begin{proof}
The first two items follow directly from Lemma \ref{propT} jointly with (\ref{rho}). Since $T\equiv0$ we may use (\ref{T}) to deduce
\begin{eqnarray*}
0&=&T(e_i,\nabla f,\nabla f)\\
 &=&Ric(e_i,\nabla f)|\nabla f|^2-Ric(\nabla f,\nabla f)\langle \nabla f, e_{i}\rangle,
\end{eqnarray*}
in other words $$Ric(e_i,\nabla f)|\nabla f|^2=Ric(\nabla f,\nabla f)\langle \nabla f, e_{i}\rangle.$$ So, for $i=a\geq2$, we obtain $R_{1a}=0$. Furthermore $Ric(e_1)=g^{ij}R_{1j}e_i=R_{11}e_1.$ Therefore,  $e_1=\frac{\nabla f}{|\nabla f|}$ is an eigenvector of $Ric,$ which establishes the third assertion.

Proceeding we consider the Codazzi equation
\begin{equation}
\label{codazzi} R_{1abc}=\nabla^{\Sigma}_{b}h_{ca}-\nabla^{\Sigma}_{c}h_{ba}, \,\,\, a,b,c=2,\ldots,n.
\end{equation}
By contracting  (\ref{codazzi}) with respect to indices $a$ and $c,$ and using (\ref{item1pT}), we get
\begin{equation*}
R_{1b}=\nabla^{\Sigma}_{b}H-g^{ac}\nabla^{\Sigma}_{c}h_{ba}=\frac{n-2}{n-1}\nabla^{\Sigma}_{b}H.
\end{equation*}
Now, we use that $R_{1b}=0$ to conclude that $H$ is constant on $\Sigma,$ which gives the fourth item.  Next, since $e_1=\frac{\nabla f}{|\nabla f|}$ is an eigenvector of $Ric,$ we may choose the frame $$\{e_{1}=\frac{\nabla f}{|\nabla f|},e_{2},\ldots,e_{n}\}$$ diagonalizing $Ric$ such that $Ric (e_k)=\lambda_ke_k$ for $k=1,2,\ldots,n.$ Using once more that  $T\equiv0,$  we have for all $a,b\geq2$ 
\begin{eqnarray*}
0&=&T_{a1b}=\frac{n-1}{n-2}(R_{ab}\nabla_{1}f-R_{b1}\nabla_{a}f)-\frac{R}{n-2}(g_{ab}\nabla_{1}f-g_{1b}\nabla_{a}f)\\
 &&+\frac{1}{n-2}(g_{ab}R_{1s}\nabla^{s}f-g_{1b}R_{as}\nabla^{s}f)\\
 &=&\frac{n-1}{n-2}R_{ab}|\nabla f|-\frac{R}{n-2}g_{ab}|\nabla f|+\frac{1}{n-2}g_{ab}\lambda_1|\nabla f|.
\end{eqnarray*}
From here it follows that $R_{ab}=\frac{R-\lambda_1}{n-1}g_{ab}$ and then $\lambda_2=\ldots=\lambda_n=\frac{R-\lambda_1}{n-1},$ which gives the fifth assertion. Finally, we use (\ref{codazzi}) as well as (\ref{item1pT}) and (\ref{item4pT}) of the proposition to obtain the last one. So, we complete the proof of the proposition.
\end{proof}

Proceeding with such a metric with $T\equiv0$ we obtain the following lemma.

\begin{lemma}\label{lemcvanish}
Let $(M^n,\,g,\,f)$ be a Miao-Tam critical metric with $T\equiv0.$ Then $C\equiv 0,$ namely, $(M^n,\,g)$ has harmonic Weyl tensor.
\end{lemma}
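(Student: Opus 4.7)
My plan is to exploit $T\equiv 0$ to decompose the Ricci tensor into a form involving only $g$ and $df\otimes df$ with scalar coefficients depending on $f$ alone, and then compute the Cotton tensor directly from this decomposition via the Miao--Tam Hessian equation. On the open set $\Omega := \{\nabla f \neq 0\}$, items (3) and (5) of Proposition \ref{propvanishT} identify $e_1 = \nabla f/|\nabla f|$ as an eigenvector of $\mathrm{Ric}$ with eigenvalue $\lambda_1 = R_{11}$, and give the single common eigenvalue $\alpha := (R-\lambda_1)/(n-1)$ on its orthogonal complement. Hence
\[
\mathrm{Ric} \;=\; \alpha\,g \;+\; \beta\,df\otimes df, \qquad \beta := \frac{n\lambda_1 - R}{(n-1)|\nabla f|^2}.
\]
By (\ref{gradrho}) combined with item (3), $\nabla\rho = 2f\lambda_1\nabla f$, and the vanishing of $\nabla^{\Sigma}\rho$ that falls out of Lemma \ref{propT} when $T\equiv 0$ forces $\rho$ to be a function of $f$ alone; consequently $|\nabla f|^{2}=\rho-2f/(n-1)-Rf^{2}/(n-1)$, $\lambda_1$, $\alpha$, and $\beta$ are all functions of $f$ only.

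Next, since $R$ is constant, $C_{ijk} = \nabla_i R_{jk} - \nabla_j R_{ik}$. Differentiating the decomposition of $\mathrm{Ric}$ and using (\ref{eqfund1}) to replace each $\nabla_k\nabla_i f$ by $fR_{ki}+(\Delta f+1)g_{ki}$, the cubic-in-$\nabla f$ terms cancel on antisymmetrization in $(i,j)$, the combination $R_{ik}\nabla_j f - R_{jk}\nabla_i f$ collapses to $\alpha\bigl(g_{ik}\nabla_j f - g_{jk}\nabla_i f\bigr)$ because the $\beta$-parts cancel, and the remaining symmetric $g$-pieces combine cleanly. What survives is a rank-one expression
\[
C_{ijk} \;=\; K(f)\,\bigl(\nabla_i f\,g_{jk} - \nabla_j f\,g_{ik}\bigr),\qquad K := \alpha'(f) - \beta(f)\bigl(f\alpha + \Delta f + 1\bigr).
\]

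To conclude, the Cotton tensor is trace-free by the twice-contracted second Bianchi identity, $g^{jk}C_{ijk} = 0$. Applied to the displayed formula this yields $(n-1)K\,\nabla_i f = 0$, so $K \equiv 0$ on $\Omega$; on the complement $M\setminus\Omega$ the right-hand side already vanishes from $\nabla f = 0$. By continuity $C \equiv 0$ on all of $M^n$, which via (\ref{cottonwyel}) is precisely the statement that $(M^n,g)$ has harmonic Weyl tensor. The delicate step is the first one: upgrading the pointwise spectral information of Proposition \ref{propvanishT} to scalar coefficients depending on $f$ only, which is exactly what makes the cubic and mixed terms cancel in the Cotton computation. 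Once this structural reduction is in hand, the remainder is algebra driven by the Miao--Tam Hessian equation and the trace-freeness of $C$.
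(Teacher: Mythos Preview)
Your argument is correct and takes a genuinely different route from the paper's. The paper works through the identity $fC_{ijk}=W_{ijkl}\nabla^{l}f$ (obtained from Lemma~\ref{lemTCW} with $T\equiv0$) and then kills each block of $W_{\cdot\cdot\cdot\cdot}\nabla f$ separately at regular points, using the Codazzi equation and a direct computation of $W(\nabla f,\partial_a,\nabla f,\partial_b)$ in level-set coordinates. You instead exploit Proposition~\ref{propvanishT} to write $\mathrm{Ric}=\alpha(f)\,g+\beta(f)\,df\otimes df$, differentiate, and reduce $C$ to the rank-one form $K(f)\bigl(\nabla_i f\,g_{jk}-\nabla_j f\,g_{ik}\bigr)$, which the trace condition $g^{jk}C_{ijk}=0$ annihilates. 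Your path is shorter and purely algebraic once the spectral reduction is in hand, and it never touches the Weyl tensor; the paper's path is more geometric and yields slightly more (it actually pins down $W_{1abc}=0$ and $W(\nabla f,\cdot,\nabla f,\cdot)$ along the way).

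One small gap to close: the displayed formula $C_{ijk}=K(f)\bigl(\nabla_i f\,g_{jk}-\nabla_j f\,g_{ik}\bigr)$ is only derived on $\Omega=\{\nabla f\neq0\}$, so the sentence ``on the complement $M\setminus\Omega$ the right-hand side already vanishes from $\nabla f=0$'' is not meaningful---you have no formula there to evaluate. What you actually need in order to pass from $C\equiv0$ on $\Omega$ to $C\equiv0$ on $M$ by continuity is the density of $\Omega$; this follows from the analyticity of $f$ and $g$ in harmonic coordinates (see the paper's proof of Theorem~\ref{thm1}, or \cite{Corvino}). Alternatively, you can borrow the paper's trick at this single step: since $T\equiv0$, Lemma~\ref{lemTCW} gives $fC_{ijk}=W_{ijkl}\nabla^{l}f$, which vanishes automatically where $\nabla f=0$; combined with your $C\equiv0$ on $\Omega$ this yields $fC\equiv0$ on all of $M$, hence $C\equiv0$ on $M\setminus\partial M$, and then continuity finishes.
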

\begin{proof}
The first part of the proof is standard and it follows the proof of Lemma 4.2 of \cite{CaoChen}. Here we present its proof for the sake of completeness.

First, since $T\equiv0$ we invoke Lemma \ref{lemTCW} to deduce $fC_{ijk}=W_{ijkl}\nabla^{l}f,$ which implies
\begin{equation}
\label{lemcvanisheq1}fC_{ijk}\nabla^{k}f=0.
\end{equation}
We now consider a regular point $p\in M^n,$  with associated level set  $\Sigma.$ We choose any local coordinates $(\theta^{2},\ldots,\theta^{n})$ on $\Sigma$ and split the metric in the local coordinates $(f,\theta^{2},\ldots,\theta^{n})$ as $$g=\frac{1}{|\nabla f|^{2}}df^2+g_{ab}(f,\,\theta) d\theta^{a}d\theta^{b}.$$  Denoting $\partial_{f}=\partial_{1}=\frac{\nabla f}{|\nabla f|^{2}}$ we get
\begin{eqnarray*}
\nabla_{1}f=1 \,\,\,{\rm and}\,\,\,\nabla_{a}f=0,\,\,\,{\rm for}\,\,\,a\geq 2.
\end{eqnarray*}
From (\ref{lemcvanisheq1}) we have $fC_{ij1}=0$ for all $i,j=1,\ldots,n.$ Moreover, for $a,b,c\geq 2,$ by using the Codazzi equation jointly with first and fourth items of Proposition \ref{propvanishT} we have
\begin{equation*}
R_{1abc}=\nabla_{b}^{\Sigma}h_{ac}-\nabla_{c}^{\Sigma}h_{ab}=0.
\end{equation*}

In particular, using $R_{1a}=0$ we get $$W_{1abc}=R_{1abc}=0.$$ Whence, we obtain for $a,b,c\geq 2$ $$fC_{abc}=W_{abcs}\nabla^{s}f=W_{abc1}\nabla^{1}f=0.$$ 

We now claim that $fC_{1ab}=0$ for all $a,b\geq 2.$ To do this, first, we notice that
\begin{equation*}
fC_{1ab}=W_{1abs}\nabla^{s}f=W_{1abi}g^{is}\nabla_{s} f,=W_{1ab1}|\nabla f|^2=-\frac{1}{|\nabla f|^{2}}W(\nabla f,\partial_a,\nabla f,\partial_b).
\end{equation*}
On the other hand, from (\ref{weyl}) we have
\begin{eqnarray}
\label{lemcvanisheq2}
\frac{1}{|\nabla f|^{2}}W(\nabla f,\partial_a,\nabla f,\partial_b)&=&\frac{1}{|\nabla f|^{2}}R(\nabla f,\partial_a,\nabla f,\partial_b)+\frac{R}{(n-1)(n-2)}g_{ab}\nonumber\\&&-\frac{1}{(n-2)}\left(\frac{1}{|\nabla f|^{2}}Ric(\nabla f,\nabla f)g_{ab}+R_{ab}\right).
\end{eqnarray}

We now analyze the second fundamental form in the local coordinate $(f,\theta^{2},\ldots,\theta^{n}).$ It is easy to see that
\begin{equation*}
h_{ab}=\frac{1}{|\nabla f|}\langle \nabla f,\nabla_{a}\partial_{b}\rangle=\frac{1}{|\nabla f|}\langle \nabla f,\Gamma_{ab}^{1}\partial_{f}\rangle=\frac{\Gamma_{ab}^{1}}{|\nabla f|}.
\end{equation*}

Moreover, a standard computation allows us to obtain
\begin{eqnarray*}
\Gamma_{ab}^{1}&=&\frac{1}{2}g^{1j}\big(\partial_{a}g_{bj}+\partial_{b}g_{ja}-\partial_{j}g_{ab}\big)\\&=&\frac{1}{2}g^{11}\big(\partial_{a}g_{b1}+\partial_{b}g_{1a}-\partial_{f}g_{ab}\big)\\
 &=&-\frac{1}{2}\nabla f (g_{ab}).
\end{eqnarray*}

From here it follows that
\begin{equation}
\label{lemcvanisheq3} h_{ab}=-\frac{\nabla f}{2|\nabla f|}(g_{ab}).
\end{equation}

By Proposition \ref{propvanishT}, $|\nabla f|$ is constant on $\Sigma,$ which implies that
\begin{equation}
\label{lemcvanisheq4}[\partial_a,\nabla f]=0.
\end{equation}
Since $\big\langle \frac{\nabla f}{|\nabla f|},\partial_{a}\big\rangle=0,$ we conclude $\nabla_{\frac{\nabla f}{|\nabla f|}}\frac{\nabla f}{|\nabla f|}=0.$ Hence, we can use (\ref{lemcvanisheq4})  to arrive at
\begin{eqnarray*}
\frac{1}{|\nabla f|^2}R(\nabla f,\partial_a,\nabla f,\partial_b)&=&\frac{1}{|\nabla f|}\langle\nabla_{\frac{\nabla f}{|\nabla f|}}\nabla_{a}\partial_{b}-\nabla_{a}\nabla_{\frac{\nabla f}{|\nabla f|}}\partial_b,\nabla f\rangle\nonumber\\
 &=&\frac{1}{|\nabla f|^2}\langle\nabla_{\nabla f}\big(\nabla_{a}^{\Sigma}\partial_b+\nabla_{a}^{\perp}\partial_b\big),\nabla f\rangle- \frac{1}{|\nabla f|}\langle\nabla_{a}\nabla_{\frac{\nabla f}{|\nabla f|}}\partial_b,\nabla f\rangle\nonumber\\
 &=&\frac{\nabla f}{|\nabla f|}(h_{ab})+h_{ac}h^{c}_{b}\nonumber.
\end{eqnarray*}
From this, we deduce
\begin{equation}
\label{9990}
\frac{1}{|\nabla f|^2}R(\nabla f,\partial_a,\nabla f,\partial_b)=\frac{\nabla f}{(n-1)|\nabla f|}H g_{ab}-\frac{H^2}{(n-1)^2}g_{ab}.
\end{equation}
In particular, taking the trace in (\ref{9990}) with respect to $a$ and $b$ we have $$\frac{1}{|\nabla f|^2}Ric(\nabla f,\nabla f)=\frac{\nabla f}{|\nabla f|}H-\frac{H^2}{(n-1)}$$ and then (\ref{9990}) can be written as 
\begin{equation}\label{sol}
R(\nabla f,\partial_a,\nabla f,\partial_b)=\frac{Ric(\nabla f,\nabla f)}{(n-1)}g_{ab}.
\end{equation} By using Proposition \ref{propvanishT} we have $$\frac{1}{|\nabla f|^2}Ric(\nabla f,\nabla f)=\lambda$$ and $$Ric(\partial_a,\partial_b)=\mu g_{ab},$$ for $a,b\geq2$. 

Therefore, substituting (\ref{sol}) in (\ref{lemcvanisheq2}) we get
\begin{eqnarray*}
fC_{1ab}&=&-\frac{1}{|\nabla f|^{2}}W(\nabla f,\partial_a,\nabla f,\partial_b)\\
 &=&-\frac{1}{|\nabla f|^{2}}\frac{Ric(\nabla f,\nabla f)}{(n-1)}g_{ab}+\frac{1}{|\nabla f|^2}\frac{Ric(\nabla f,\nabla f)}{(n-2)}g_{ab}+\frac{R_{ab}}{(n-2)}-\frac{R}{(n-1)(n-2)}g_{ab}\\
 &=&-\frac{\lambda}{(n-1)}g_{ab}+\frac{\lambda}{(n-2)}g_{ab}+\frac{\mu}{(n-2)}g_{ab}-\frac{\lambda+(n-1)\mu}{(n-1)(n-2)}g_{ab}\\
 &=&0,
\end{eqnarray*} which completes our claim. 

Finally, we have $fC_{ijk}=0$ at a point $p$ where  $\nabla f(p)\neq0.$ Therefore, we use Lemma \ref{lemTCW}  to conclude that $fC_{ijk}\equiv0$ in $M^n.$ Using this we obtain $C_{ijk}\equiv0$ in $M\backslash{\partial M}$ and then the proof of the lemma follows from the continuity of the Cotton tensor .
\end{proof}

To finish this section, we shall present a fundamental integral formula.
\begin{lemma}\label{propBT}
Let $(M^n,g,f)$ be a Miao-Tam critical metric. Then
\begin{equation*}
\int_{M}f^2B(\nabla f,\nabla f)dM_{g}=-\frac{1}{2(n-1)}\int_{M}f^2|T|^2 dM_{g}.
\end{equation*}
\end{lemma}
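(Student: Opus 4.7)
The plan is to start from the decomposition of the Bach tensor as a divergence of the Cotton tensor plus a Weyl-Ricci contraction, integrate by parts against $f^2\nabla^i f\nabla^j f$, and invoke Lemma~\ref{lemTCW} to produce the tensor $T$. Using (\ref{cottonwyel}) together with the antisymmetry $C_{ikj}=-C_{kij}$, a direct differentiation shows $\nabla^k\nabla^l W_{ikjl}=-\tfrac{n-3}{n-2}\nabla^k C_{ikj}=\tfrac{n-3}{n-2}\nabla^k C_{kij}$, so (\ref{bach}) becomes
\[
(n-2)B_{ij}=\nabla^k C_{kij}+R^{kl}W_{ikjl}\qquad(n\geq 4),
\]
and the same formula holds trivially in dimension three, where $W\equiv 0$ and (\ref{bach3}) is this expression. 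Splitting correspondingly,
\[
(n-2)\!\int_M f^2 B(\nabla f,\nabla f)\,dM_g=I_1+I_2,
\]
with $I_1=\int_M f^2\nabla^k C_{kij}\nabla^i f\nabla^j f\,dM_g$ and $I_2=\int_M f^2 R^{kl}W_{ikjl}\nabla^i f\nabla^j f\,dM_g$.

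For $I_1$ I would integrate by parts; the boundary contribution vanishes because $f|_{\partial M}=0$. Moving $\nabla^k$ off $C_{kij}$, the resulting terms $f\,\nabla^k f\nabla^i f$ and $f^2\nabla^k\nabla^i f$ pair $C_{kij}$ against tensors symmetric in $(k,i)$, hence vanish by antisymmetry of $C$. Replacing $\nabla^k\nabla^j f$ via the Miao-Tam equation (\ref{fundeqtens}), and noting that $g^{kj}C_{kij}=0$ (a direct consequence of the contracted Bianchi identity and the constancy of $R$), leaves
\[
I_1=-\int_M f^3 C_{kij}R^{kj}\nabla^i f\,dM_g.
\]

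Next I would apply Lemma~\ref{lemTCW} in the form $fC_{kij}=T_{kij}+W_{kijs}\nabla^s f$. The resulting Weyl piece is $-\int_M f^2 R^{kj}W_{kijs}\nabla^i f\nabla^s f\,dM_g$; using the symmetries $W_{abcd}=-W_{bacd}=-W_{abdc}$ one checks that $R^{kj}W_{kijs}\nabla^i f\nabla^s f=R^{kl}W_{ikjl}\nabla^i f\nabla^j f$ after relabeling, so this piece equals $-I_2$. Therefore
\[
I_1+I_2=-\int_M f^2 R^{kj}T_{kij}\nabla^i f\,dM_g.
\]

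The hard part is the final algebraic reduction. Substituting the explicit definition (\ref{T}) of $T$ in $R^{kj}T_{kij}\nabla^i f$ and expanding, the answer organizes into the four scalar invariants $|Ric|^2|\nabla f|^2$, $|Ric(\nabla f)|^2$, $R^2|\nabla f|^2$ and $R\,Ric(\nabla f,\nabla f)$. Comparing with the formula for $|T|^2$ obtained at the start of the proof of Lemma~\ref{propT} (the display leading to (\ref{eqfT})), the coefficients match and one finds
\[
R^{kj}T_{kij}\nabla^i f=\frac{n-2}{2(n-1)}|T|^2.
\]
Dividing by $n-2$ then yields the claimed integral identity. The matching of coefficients is mechanical but must be done carefully; all other steps are index manipulations and one application each of integration by parts, the fundamental equation (\ref{fundeqtens}), and Lemma~\ref{lemTCW}.
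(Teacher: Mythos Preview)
Your proof is correct and follows essentially the same approach as the paper: both start from $(n-2)B_{ij}=\nabla^kC_{kij}+R^{kl}W_{ikjl}$, integrate by parts (using $f|_{\partial M}=0$), invoke Lemma~\ref{lemTCW} and the fundamental equation~(\ref{fundeqtens}), and reduce to a Ricci--$T$ contraction that equals $\tfrac{n-2}{2(n-1)}|T|^2$. The only minor differences are the order of operations---you integrate by parts before substituting Lemma~\ref{lemTCW}, whereas the paper first obtains the pointwise identity~(\ref{123456}) and then integrates---and the final algebraic step, which you verify by direct expansion against the $|T|^2$ formula from the proof of Lemma~\ref{propT} while the paper uses an index relabeling/antisymmetrization trick.
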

\begin{proof}
From (\ref{cottonwyel}) we can write the Bach tensor  as
$$B_{ij}=\frac{1}{n-2}\left(\nabla_{k}C_{kij}+R_{kl}W_{ikjl}\right).$$ Under this notation we get
\begin{eqnarray*}
f^2B_{ij}&=&\frac{1}{n-2}\left(f^2\nabla_k C_{kij}+f^2 R_{kl}W_{ikjl}\right)\\
 &=&\frac{1}{n-2}\big(\nabla_k\left(f^2C_{kij}\right)-2fC_{kij}\nabla_{k}f+f^2R_{kl}W_{ikjl}\big).
\end{eqnarray*}
We now use Lemma \ref{lemTCW} jointly with (\ref{cottonwyel}) to obtain
\begin{eqnarray*}
 f^2B_{ij}&=&\frac{1}{n-2}\left(\nabla_k\left[f(W_{kijl}\nabla_{l}f+T_{kij})\right]-2fC_{kij}\nabla_{k}f+f^2R_{kl}W_{ikjl}\right)\\
 &=&\frac{1}{n-2}\big(\nabla_k\left(fT_{kij}\right)+f\nabla_{k}W_{kijl}\nabla_{l}f+fW_{kijl}\nabla_{k}\nabla_{l}f\\
 &&+W_{kijl}\nabla_{k}f\nabla_{l}f-2fC_{kij}\nabla_{k}f+f^2R_{kl}W_{ikjl}\big)\\
 &=&\frac{1}{n-2}\Big(\nabla_{k}\left(fT_{kij}\right)+\frac{n-3}{n-2}fC_{jki}\nabla_{k}f+f\left(\nabla_{k}\nabla_{l}f-fR_{kl}\right)W_{kijl}\\
 &&+W_{kijl}\nabla_{k}f\nabla_{l}f+2f C_{ikj}\nabla_{k}f\Big).
\end{eqnarray*}
We recall that the Weyl tensor is trace-free on any pair of indices. Next, by using (\ref{fundeqtens}) we deduce
\begin{equation*}
f^2B_{ij}=\frac{1}{n-2}\Big[\nabla_{k}(f T_{kij})+\frac{n-3}{n-2}fC_{jki}\nabla_{k}f+2fC_{ikj}\nabla_{k}f+W_{kijl}\nabla_{k}f\nabla_{l}f\Big].
\end{equation*}
Whence, we obtain
\begin{equation}
\label{123456}
f^2B(\nabla f,\nabla f)=\frac{1}{n-2}\nabla_{k}\left(fT_{kij}\right)\nabla_{i}f\nabla_{j}f.
\end{equation} On the other hand, we notice that $$\nabla_{k}\left(fT_{kij}\right)\nabla_{i}f\nabla_{j}f=\nabla_{k}\left(fT_{kij}\nabla_{i}f\nabla_{j}f\right)- fT_{kij}\nabla_{k}\nabla_{i}f\nabla_{j}f-fT_{kij}\nabla_{i}f\nabla_{k}\nabla_{j}f.$$ Now, on integrating (\ref{123456}) over $M$ and  using Stokes  formula we arrive at

\begin{eqnarray*}
\int_M f^2 B(\nabla f,\nabla f)dM_{g}
 &=&-\frac{1}{n-2}\left(\int_M fT_{kij}\nabla_{k}\nabla_{i}f\nabla_{j}f dM_{g}+\int_M fT_{kij}\nabla_{i}f\nabla_{k}\nabla_{j}fdM_{g}\right)\\
 &=&-\frac{1}{n-2}\left(\int_M f^2T_{kij}R_{ki}\nabla_{j}f dM_{g}+\int_M f^2T_{kij}R_{kj}\nabla_{i}fdM_{g}\right),
\end{eqnarray*}
in the last equality we have used once more (\ref{fundeqtens}). Finally, we change $k$ by $i$ above and then using the properties of $T$ defined in (\ref{T})  we achieve
\begin{eqnarray*}
\int_M f^2 B(\nabla f,\nabla f)dM_{g}&=&-\frac{1}{2(n-2)}\left(\int_M f^2T_{kij}R_{ki}\nabla_{j}f dM_{g}+\int_M f^2T_{kij}R_{kj}\nabla_{i}fdM_{g}\right)\\
 &&-\frac{1}{2(n-2)}\left(\int_M f^2T_{ikj}R_{ik}\nabla_{j}f dM_{g}+\int_M f^2T_{ikj}R_{ij}\nabla_{k}f dM_{g}\right)\\
 &=&\frac{1}{2(n-2)}\int_M f^2T_{ikj}\left(R_{kj}\nabla_{i}f-R_{ij}\nabla_{k}f\right)dM_{g}\\
 &=&-\frac{1}{2(n-1)}\int_M f^2|T|^2 dM_{g},
\end{eqnarray*}
that was to be proved.
\end{proof}

\section{Proof of the Results}

\subsection{Proof of Theorem \ref{thm1}}
\begin{proof}
First, since $M^4$ satisfies $\int_M f^2B(\nabla f,\nabla f)dM\geq 0$ we invoke Lemma \ref{propBT} to conclude that $T\equiv0.$  Therefore,  from Lemma \ref{lemcvanish} we have $C\equiv0.$ Hence, we use Lemma \ref{lemTCW} to obtain
\begin{equation*}
W_{ijkl}\nabla^{l}f=0.
\end{equation*}
We now consider a point  $p\in M^4$ such that $\nabla f (p)\neq0.$ Choosing an orthonormal frame $\{ e_1, e_2, e_3, e_4\}$ with $e_1=\frac{\nabla f}{|\nabla f|}$ at the point $p$ we arrive at
\begin{equation}
\label{123er}
W_{ijk1}=0,
\end{equation} for all $1\le i,\,j,\,k\le 4.$

We now claim that $W_{ijkl}=0$ whenever $\nabla f (p)\neq 0.$ Indeed, recalling that the Weyl tensor is trace-free on any pair of indices, we have
\begin{equation*}
W_{2121}+W_{2222}+W_{2323}+W_{2424}=0.
\end{equation*} By using (\ref{123er}) we have $$W_{2323}=-W_{2424}.$$ In a similar way we have $$W_{2424}=-W_{3434}=W_{2323}.$$ From here it follows that $W_{2323}=0.$ Moreover, we also have
\begin{equation*}
W_{1314}+W_{2324}+W_{3334}+W_{4344}=0.
\end{equation*} Therefore, $W_{2324}=0.$ This proves that $W_{abcd}=0$ unless $a,b,c,d$ are all distinct. But, there are only three choices for the indices. This concludes the proof of our claim.

Hereafter, choosing appropriate coordinates (e.g. harmonic coordinates) we conclude that $f$ and $g$ are analytic, see for instance Theorem 2.8 in \cite{Corvino} (or Proposition 2.1 in \cite{CEM}). Whence, we conclude that $f$ can not vanish identically in a non-empty open set. So, the set  of regular points is dense in $M^4.$ This allows us to conclude that  $M^4$ is locally conformally flat and we are in position to use Theorem \ref{theoremMT} (see also Theorem 1.2 of  \cite{miaotamTAMS}) to conclude that $(M^4 ,\,g)$ is isometric to a geodesic ball in a simply connected space form $\Bbb{R}^{4},$ $\Bbb{H}^{4}$ or $\Bbb{S}^{4}.$ This is what we wanted to prove.

\end{proof}

\subsection{Proof of Theorem \ref{thm2}}

\begin{proof}
The first part of the proof will follow  \cite{CCCMM}. To begin with, we consider $(M^3,\,g,\,f)$ be a simply connected, compact Miao-Tam critical metric with boundary isometric to a standard sphere $\Bbb{S}^2.$ Next, we recall that the Cotton tensor can be written as $C_{ijk}=\nabla_iA_{jk}-\nabla_{j}A_{ik}.$ From here it follows that
\begin{eqnarray*}
\nabla_iB_{ij}&=&\nabla_i\nabla_k C_{kij}\\
&=& \big(\nabla_i\nabla_k-\nabla_k\nabla_i\big)\nabla_k A_{ij}.
\end{eqnarray*}
Whence, the previous commutator term implies $$\nabla_iB_{ij}=-R_{il}\nabla_{l}A_{ij}+R_{kl}\nabla_{k}A_{lj}+R_{ikjl}\nabla_kA_{il}$$ and then we get
\begin{eqnarray}
\label{n1}
\nabla_iB_{ij}=R_{ikjl}\nabla_kA_{il}.
\end{eqnarray}
We now remember that  $W\equiv 0$ in dimension three. So, a straightforward computation involving (\ref{WS}) and (\ref{n1}) gives
\begin{eqnarray}
\label{n2}
\nabla_iB_{ij} \nabla_{j} f &=&A_{ik}C_{kji}\nabla_{j} f+A_{ik}\nabla_jA_{ki}\nabla_{j} f+A_{jl}\nabla_iA_{il}\nabla_{j} f\nonumber\\
 &&-A_{il}\nabla_jA_{il}\nabla_{j} f-A_{jk}g_{il}C_{kil}\nabla_{j} f-A_{jk}g_{il}\nabla_iA_{kl}\nabla_{j} f\nonumber\\
 &=&-R_{ik}C_{jki}\nabla_{j} f\nonumber\\
 &=&-\frac{1}{2}\left(C_{jki}R_{ik}\nabla_jf+C_{kji}R_{ij}\nabla_kf\right),
\end{eqnarray} where we have used that $C$ is skew-symmetric in the first two indices and that $C$ is trace-free in any two indices. Now, we combine (\ref{n2}) with (\ref{T}) to deduce
\begin{eqnarray*}
(div B)(\nabla f)&=&\frac{1}{2}C_{kji}\left(R_{ik}\nabla_jf-R_{ij}\nabla_kf\right)\\
&=&\frac{1}{4}C_{kji}T_{kji}.
\end{eqnarray*} Using once more that $W\equiv 0$ jointly with Lemma \ref{lemTCW} we arrive at $$(div B)(\nabla f)=\frac{f}{4}|C|^2.$$ Therefore, our assumption together with the continuity of the Cotton tensor implies $C\equiv 0$ in $M^3$ and then $(M^3,\,g)$ is locally conformally flat. Finally, it suffices to use Theorem \ref{theoremMT} to get the promised result.
\end{proof}

\begin{acknowledgement}
The authors want to thank the referees for their careful reading and helpful suggestions. 
The third author would like to thank Huai-Dong Cao for valuable conversations about Bach-flat metrics.  He would like to extend his special thanks to Pengzi Miao for helpful conversations about critical metrics. Moreover, he wish to express his gratitude for the excellent support during his stay at ICTP-Italy, where part of this work was started. Finally, he wishes to thank the Department of Mathematics - Lehigh University  for the warm hospitality and for the fruitful research environment.\end{acknowledgement}

\end{document}